\def\cal{\mathcal}
\newcommand{\field}[1]{\mathbb{#1}}
\newcommand{\C}{\field{C}}
\newcommand{\T}{\field{T}}
\newcommand{\Z}{\field{Z}}
\newcommand{\kk}{{\bf k}}
\newtheorem{theorem}{Theorem}[section]
\newtheorem{proposition}{Proposition}[section]
\newtheorem{lemma}{Lemma}[section]
\newtheorem{corollary}{Corollary}[section]
\newtheorem{definition}{Definition}[section]
\newtheorem{remark}{Remark}[section]
\newtheorem{example}{Example}[section]
\begin{document}

\makeatletter	   
\makeatother     

\title{Laurent cancellation for rings of transcendence degree one over a field}
\author{Gene Freudenburg}
\date{\today} 
\subjclass[2010]{13B99,16S34 (primary); 13F60 (secondary)} 
\keywords{Laurent polynomial, cancellation problem}
\maketitle

\pagestyle{plain}

\begin{abstract} If $R$ is an integral domain and $A$ is an $R$-algebra, then $A$ has the {\it Laurent cancellation property over $R$} if 
$A^{[\pm n]}\cong_RB^{[\pm n]}$ implies $A\cong_RB$ ($n\ge 0$ and $B$ an $R$-algebra). Here, $A^{[\pm n]}$ denotes the ring of Laurent polynomials in $n$ variables over $A$. 
Our main result  ({\it Thm.~\ref{thm-2}}) is that, if $R$ is a field and the transcendence degree of $A$ over $R$ is one, then $A$ has the Laurent cancellation property over $R$. 
The proof uses the characterization of Laurent polynomial rings over a field given in {\it Thm.~\ref{character}}.
 \end{abstract}
 
\section{Introduction}

If $R$ is an integral domain, then an {\it $R$-algebra} will mean an integral domain containing $R$ as a subring. If $A$ is an $R$-algebra and $n\in\Z$, $n\ge 0$, then $A^{[n]}$ is the polynomial ring in $n$ variables over $A$, and $A^{[\pm n]}$ is the ring of Laurent polynomials in $n$ variables over $A$.

In this paper, we consider the following question. 
\medskip
\begin{quote} {\it Let $R$ be an integral domain, let $A$ and $B$ be $R$-algebras, and let $n$ be a non-negative integer. Does $A^{[\pm n]}\cong_RB^{[\pm n]}$ imply $A\cong_RB$?}
\end{quote}
\medskip
We say that $A$ has the {\it Laurent cancellation property over $R$} if this question has a positive answer for all pairs $(B,n)$. 

Our main result ({\it Thm.~\ref{thm-2}}) is that, if $\kk$ is a field, then any $\kk$-algebra of transcendence degree one over $\kk$ has the Laurent cancellation property over $\kk$. This result parallels the well-known theorem of Abhyankar, Eakin, and Heinzer which asserts that, if $A$ and $B$ are $\kk$-algebras of transcendence one over $\kk$, then the condition $A^{[n]}\cong_{\kk}B^{[n]}$ for some $n\ge 0$ implies $A\cong_{\kk}B$; see \cite{Abhyankar.Eakin.Heinzer.72}. Note that our main result implies that if $X$ and $Y$ are algebraic curves over $\kk$, 
and if $\T^n$ is the torus of dimension $n$ over $\kk$ ($n\ge 0$), then the condition $X\times\T^n\cong Y\times\T^n$ implies $X\cong Y$. 

In \cite{Makar-Limanov.98}, Makar-Limanov gives a proof of the Abhyankar-Eakin-Heinzer theorem for the field $\kk =\C$ using the theory of locally nilpotent derivations (LNDs); see also \cite{Crachiola.Makar-Limanov.05}, Cor. 3.2. The proof of our main result uses $\Z$-gradings in a similar way. Where Makar-Limanov uses the subring of elements of degree zero for all LNDs, we use the subring of elements of degree zero for all $\Z$-gradings over $R$, denoted ${\cal N}_R(A)$. The other key ingredient in the proof is {\it Thm.~\ref{character}}, which is the following characterization of Laurent polynomial rings over a field. 
\begin{quote} {\it 
Let $\kk$ be a field, and let $A$ be a $\kk$-algebra. The following are equivalent.
\smallskip
\begin{itemize}
\item [1.] $A=\kk^{[\pm 1]}$.
\smallskip
\item [2.] The following three conditions hold. 
\subitem {\rm (a)} $\kk$ is algebraically closed in $A$
\subitem {\rm (b)} ${\rm tr.deg}_{\kk}A=1$
\subitem {\rm (c)} $A^*\not\subset {\cal N}_{\kk}(A)$
\end{itemize}
}
\end{quote}

Certain cases of Laurent cancellation were investigated in \cite{Freudenburg.ppta}; see {\it Remark~\ref{summary}}. In addition, Bhatwadekar and Gupta \cite{Bhatwadekar.Gupta.12} have shown that the Laurent polynomial ring $R^{[\pm n]}$ has the Laurent cancellation property over $R$; see {\it Thm.~\ref{thm-4}} below. We are not aware of an example of an integral domain $R$ and an $R$-algebra $A$ such that $A$ fails to have the Laurent cancellation property over $R$.

This paper was motivated by a question of David Speyer, who posed the question of Laurent cancellation above in the case $A$ and $B$ are cluster algebras. 

\subsection{Terminology and Notation}

The group of units of the integral domain $A$ is denoted $A^*$, and the field of fractions of $A$ is ${\rm frac}(A)$. Given $f\in A$, $A_f$ denotes the localization of $A$ at $f$. Given $z\in A^*$, the notation $z^{\pm 1}$ is used for the set $\{ z,z^{-1}\}$. 

For $n\ge 0$, the polynomial ring in $n$ variables over $A$ is denoted by $A^{[n]}$. If $A[x_1,...,x_n]=A^{[n]}$, the ring of Laurent polynomials over $R$ is the subring of ${\rm frac}(A^{[n]})$ defined and denoted by:
\[
 A^{[\pm n]}=A[x_1,x_1^{-1},...,x_n,x_n^{-1}]
\]

For any subring $S\subset A$, the transcendence degree of $A$ over $S$ is equal to the transcendence degree of ${\rm frac}(A)$ over ${\rm frac}(S)$, denoted 
${\rm tr.deg}_SA$. The set of elements in $A$ algebraic over $S$ is denoted by ${\rm Alg}_SA$; we also say that ${\rm Alg}_SA$ is the {\it algebraic closure of $S$ in $A$}. If $S={\rm Alg}_SA$, then $S$ is {\it algebraically closed in $A$}. 
Any $\Z$-grading of $A$ such that $S\subset A_0$ is a {\it $\Z$-grading over $S$}, where $A_0$ denotes the subring of elements of degree zero. 

If $R$ is an integral domain, $A$ is an $R$-algebra, and $W$ is a subset of $A$, then $R[W]$ is the $R$-subalgebra of $A$ generated by $W$. 

\section{$\Z$-Gradings and the Neutral Subalgebra}

\subsection{$\Z$-Gradings} Assume that $R$ is an integral domain, and $A$ is an $R$-algebra.
The set of $\Z$-gradings of $A$ is denoted $A(\Z )$, and the subset of $\Z$-gradings of $A$ over $R$ is denoted $A(\Z ,R)$. Given $\frak{g}\in A(\Z )$, let $\deg_{\frak{g}}$ denote the induced degree function on $A$, and let $A=\oplus_{i\in\Z}A_i$ be the decomposition of $A$ into $\mathfrak{g}$-homogeneous summands, where $A_i$ consists of $\mathfrak{g}$-homogeneous elements of degree $i$. Define $A^{\frak{g}}=A_0$, which is a subalgebra of $A$. 
The subalgebra $S\subset A$ is {\it $\mathfrak{g}$-homogeneous} if $S$ is generated by $\mathfrak{g}$-homogeneous elements.

Given $a\in A$, write $a=\sum_{i\in\Z}a_i$, where $a_i\in A_i$ for each $i$. The {\it support} of $a$ relative to $\mathfrak{g}$ is defined by:
 \[
{\rm Supp}_{\mathfrak{g}}(a)=\{ i\in\Z\,| \, a_i\ne 0\}
\] 
Note that (i) ${\rm Supp}_{\mathfrak{g}}(a)=\emptyset$ if and only if $a=0$, and (ii) $\#{\rm Supp}_{\mathfrak{g}}(a)=1$ if and only if $a$ is non-zero and homogeneous. 

\begin{lemma}\label{homogeneous} Let $A$ be an integral domain, and let $\mathfrak{g}\in A(\Z )$ be given.
\begin{itemize}
\item [{\bf (a)}] $A^{\mathfrak{g}}$ is algebraically closed in $A$. 
\medskip
\item [{\bf (b)}]  If $H\subset A$ is a $\mathfrak{g}$-homogeneous subalgebra, then ${\rm Alg}_HA$ is also a $\mathfrak{g}$-homogeneous subalgebra.
\end{itemize}
\end{lemma}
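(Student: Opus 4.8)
The plan is to prove both parts by examining top-degree (leading) homogeneous components, using that $A$ is a domain, so the product of nonzero $\mathfrak{g}$-homogeneous elements is again nonzero and homogeneous, with degree the sum of the degrees. For $0\ne a\in A$ I write $a=\sum_i a_i$ with $a_i\in A_i$, set $d(a)=\max\mathrm{Supp}_{\mathfrak{g}}(a)$, and call $a_{d(a)}\ne 0$ the top component of $a$.

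For \textbf{(a)}, suppose $a\in A$ is algebraic over $A_0=A^{\mathfrak{g}}$ but $a\notin A_0$; I aim for a contradiction. Fix a relation $\sum_{j=0}^n c_j a^j=0$ with $c_j\in A_0$ and $c_n\ne 0$. Since $a\notin A_0$ its support contains a nonzero integer, and after replacing $\mathfrak{g}$ by $-\mathfrak{g}$ (which leaves $A_0$ unchanged) I may assume $d:=d(a)>0$. Because $c_j\in A_0$ has degree $0$, the top component of $c_j a^j$ is $c_j a_d^{\,j}$, homogeneous of degree $jd$; as $d>0$ these degrees are distinct, so the degree-$nd$ part of the whole sum is exactly $c_n a_d^{\,n}$. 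This is nonzero in the domain $A$, contradicting that the sum vanishes. (Alternatively, localize $A$ at its nonzero homogeneous elements to get a graded ring $B$ with $B_0$ a field and $B=B_0[t,t^{-1}]$; then $A_0=A\cap B_0$, and $B_0$ is algebraically closed in $B_0(t)=\mathrm{frac}(A)$, which gives the claim at once.)

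For \textbf{(b)} the reduction is that $\mathrm{Alg}_H A$ is a subalgebra, and a subalgebra is $\mathfrak{g}$-homogeneous exactly when it is closed under passing to homogeneous components; so it suffices to prove that $a\in\mathrm{Alg}_H A$ implies every $a_i\in\mathrm{Alg}_H A$. I first show the top component $a_d$ (with $d=d(a)$) is algebraic over $H$, and then induct on $\#\mathrm{Supp}_{\mathfrak{g}}(a)$: once $a_d\in\mathrm{Alg}_H A$, the element $a-a_d$ again lies in $\mathrm{Alg}_H A$ and has strictly smaller support. To obtain $a_d$, take a relation $\sum_{j=0}^n c_j a^j=0$ with $c_j\in H$ not all zero. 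Since $H$ is $\mathfrak{g}$-homogeneous, each nonzero $c_j$ has a well-defined top homogeneous part $(c_j)_{\mathrm{top}}\in H$, and the top component of $c_j a^j$ is $(c_j)_{\mathrm{top}}a_d^{\,j}$, of degree $D_j:=\deg (c_j)_{\mathrm{top}}+jd$. Put $D=\max_j D_j$ and $J=\{j: D_j=D\}$; the degree-$D$ part of the vanishing sum is then $\sum_{j\in J}(c_j)_{\mathrm{top}}a_d^{\,j}=0$. Each summand is nonzero, so $\#J\ge 2$, and hence $\sum_{j\in J}(c_j)_{\mathrm{top}}T^j$ is a nonzero polynomial over $H$ of positive degree $\max J$ with root $a_d$. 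Thus $a_d\in\mathrm{Alg}_H A$, and the induction closes.

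The step needing the most care is the degree bookkeeping in (b): one must check that the degree-$D$ homogeneous part of $\sum_j c_j a^j$ collects contributions only from the indices $j\in J$ whose terms $c_j a^j$ have top degree exactly $D$, so that no cancellation is concealed among the lower-degree parts of the remaining terms, and that each surviving contribution $(c_j)_{\mathrm{top}}a_d^{\,j}$ is genuinely nonzero and homogeneous of degree $D$. Both the integral-domain hypothesis on $A$ and the $\mathfrak{g}$-homogeneity of $H$ are used precisely here.
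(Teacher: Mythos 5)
Your proof is correct and follows essentially the same route as the paper's: part (a) by comparing top-degree homogeneous components in the dependence relation, and part (b) by extracting the top-degree part of a dependence relation (using homogeneity of $H$ to keep the leading coefficients in $H$) and inducting on the size of the support while peeling off the top component. The differences are cosmetic---your ``closed under homogeneous components'' formulation replaces the paper's chain $H(n)\subset H(1)$, and your observation that $\#J\ge 2$ makes explicit a nondegeneracy detail the paper leaves implicit.
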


\begin{proof} Let $A=\oplus_{i\in\Z}A_i$ be the decomposition of $A$ into $\mathfrak{g}$-homogeneous summands. Given $a\in A$, write $a=\sum_{i\in\Z}a_i$, where $a_i\in A_i$, and let $\bar{a}$ denote the highest-degree (non-zero) homogeneous summand of $a$. 

In order to prove part (a), let $v\in A$ be algebraic over $A^{\mathfrak{g}}$. If $v\not\in A^{\mathfrak{g}}$, then we may assume that $\deg_{\mathfrak{g}}v >0$. Suppose that $\sum_{0\le i\le n}c_iv^i=0$ is a non-trivial dependence relation for $v$ over $A^{\mathfrak{g}}$, where $c_i\in A^{\mathfrak{g}}$ for each $i$, and $n\ge 1$.
Since $\deg_{\mathfrak{g}}\bar{v} >0$ and $\deg_{\mathfrak{g}}c_i=0$ for each $i$, we see that $c_n\bar{v}^n=0$, a contradiction. Therefore, $v\in A^{\mathfrak{g}}$, and $A^{\mathfrak{g}}$ is algebraically closed in $A$. 

For part (b), given an integer $n\ge 0$, let $H(n)$ denote the ring obtained by adjoining to $H$ all elements $a\in {\rm Alg}_HA$ such that $\#{\rm Supp}_{\mathfrak{g}}(a)\le n$. In particular, $H(0)=H$. We show by induction on $n$ that, for each $n\ge 1$:
\begin{equation}\label{decomp}
H(n) \subset H(1)
\end{equation}
This property implies ${\rm Alg}_HA=H(1)$, which is a $\mathfrak{g}$-homogeneous subring of $A$.

Assume that, for some $n\ge 2$, $H(n-1)\subset H(1)$. Let $a\in {\rm Alg}_HA$ be given such that $\#{\rm Supp}_{\mathfrak{g}}(a)=n$, and let
$\sum_{i\ge 0}h_ia^i=0$ be a non-trivial dependence relation for $a$ over $H$, where $h_i\in H$ for each $i$. Define:
\[
d=\max_{i\ge 0}\{ \deg_{\mathfrak{g}} h_ia^i\} \quad {\rm and}\quad I=\{ i\in\Z\, |\, i\ge 0\,\, ,\,\,  \deg_{\mathfrak{g}} h_ia^i=d\}
\]
Then $I$ is non-empty, and $\sum_{i\in I}\bar{h_i}\bar{a}^i=0$. Since $H$ is homogeneous, $\bar{h_i}\in H$ for each $i$. Therefore, $\bar{a}$ is algebraic over $H$. Since $a=(a-\bar{a})+\bar{a}$, it follows that $a\in H(n-1)+H(1)$. Since $H(n-1)\subset H(1)$ by the induction hypothesis, we see that $a\in H(1)$, thus proving by induction the equality claimed in (\ref{decomp}). 
\end{proof}

\subsection{The Neutral Subalgebra}

Assume that $R$ is an integral domain, and $A$ is an $R$-algebra.

\begin{definition} {\rm 
The {\it neutral $R$-subalgebra} of $A$ is:
\[
{\cal N}_R(A) = \cap_{\frak{g}\in A(\Z ,R)}A^{\frak{g}}
\]
The  {\it neutral subring} of $A$ is:
\[
{\cal N}(A) = \cap_{\frak{g}\in A(\Z )}A^{\frak{g}}
\]
$A$ is a {\it neutral $R$-algebra} if ${\cal N}_R(A)=A$. $A$ is a {\it neutral ring} if ${\cal N}(A)=A$. }
\end{definition}

\begin{lemma}\label{neutral} Let $R$ be an integral domain, and let $A$ be an $R$-algebra.
\begin{itemize}
\item [{\bf (a)}] ${\cal N}_R(A)$ is algebraically closed in $A$
\medskip
\item [{\bf (b)}] ${\cal N}_R(A^{[n]})\subset {\cal N}_R(A)$ and ${\cal N}_R(A^{[\pm n]})\subset {\cal N}_R(A)$ for each $n\ge 0$
\medskip
\item [{\bf (c)}] If $A$ is algebraic over $R[A^*]$, then ${\cal N}_R(A^{[n]}) = {\cal N}_R(A)$ and ${\cal N}_R(A^{[\pm n]})={\cal N}_R(A)$ for each $n\ge 0$
\end{itemize}
\end{lemma}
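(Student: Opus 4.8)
The plan is to handle the three parts in order, leaning on Lemma~\ref{homogeneous} throughout. For part (a), I would observe that ${\cal N}_R(A)$ is an intersection of the subalgebras $A^{\mathfrak g}$, each of which is algebraically closed in $A$ by Lemma~\ref{homogeneous}(a). An arbitrary intersection of algebraically closed subrings is again algebraically closed: if $v\in A$ is algebraic over $\cap_\alpha S_\alpha$, then it is algebraic over each $S_\alpha$, hence lies in each $S_\alpha$, hence in the intersection. Since the trivial grading lies in $A(\Z,R)$, the family is nonempty, and the claim follows.

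For part (b), the idea is to compare gradings of $A$ with gradings of $A^{[n]}$ (resp. $A^{[\pm n]}$) by (i) extending a grading of $A$ to the larger ring by declaring each variable to have degree zero, and (ii) restricting along weight gradings that single out $A$. First I would show ${\cal N}_R(A^{[n]})\subseteq A$: the weight grading $\mathfrak w$ with $\deg_{\mathfrak w}x_i=1$ and $A$ in degree zero lies in $A^{[n]}(\Z,R)$ and satisfies $(A^{[n]})^{\mathfrak w}=A$, so the neutral subalgebra is contained in $A$. In the Laurent case a single weight grading no longer isolates $A$ once $n\ge 2$, so instead I would intersect the $n$ coordinate weight gradings $\mathfrak w^{(j)}$ (with $\deg x_i=\delta_{ij}$), whose degree-zero parts meet in $A$. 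Next, given $\mathfrak g\in A(\Z,R)$, extend it to $\tilde{\mathfrak g}$ on the larger ring by placing every variable in degree zero; then the degree-zero part meets $A$ in exactly $A^{\mathfrak g}$. Combining, ${\cal N}_R(A^{[n]})={\cal N}_R(A^{[n]})\cap A\subseteq (A^{[n]})^{\tilde{\mathfrak g}}\cap A=A^{\mathfrak g}$ for every $\mathfrak g$, and intersecting over $\mathfrak g$ gives ${\cal N}_R(A^{[n]})\subseteq{\cal N}_R(A)$; the Laurent case is identical.

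For part (c), by part (b) it suffices to prove the reverse inclusion ${\cal N}_R(A)\subseteq {\cal N}_R(A^{[n]})$ and its Laurent analogue. Fix $a\in{\cal N}_R(A)$ and an arbitrary $\mathfrak h\in A^{[n]}(\Z,R)$; I must show $a$ is $\mathfrak h$-homogeneous of degree zero. The key observations are: units of the graded domain $A^{[n]}$ are homogeneous, so $A^*\subseteq(A^{[n]})^*$ consists of $\mathfrak h$-homogeneous elements, and together with $R\subseteq(A^{[n]})^{\mathfrak h}$ this makes $H:=R[A^*]$ an $\mathfrak h$-homogeneous subalgebra. Now $A$ is algebraic over $H$ by hypothesis, and $A$ is algebraically closed in $A^{[n]}$ because ${\rm frac}(A^{[n]})={\rm frac}(A)(x_1,\dots,x_n)$ is purely transcendental over ${\rm frac}(A)$; by transitivity of algebraicity this forces ${\rm Alg}_H(A^{[n]})=A$. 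Applying Lemma~\ref{homogeneous}(b) to the homogeneous subalgebra $H$ then shows that $A={\rm Alg}_H(A^{[n]})$ is itself $\mathfrak h$-homogeneous. Hence $\mathfrak h$ restricts to a grading $\mathfrak g\in A(\Z,R)$ with $A^{\mathfrak g}=A\cap(A^{[n]})^{\mathfrak h}$, and since $a\in{\cal N}_R(A)\subseteq A^{\mathfrak g}$ we conclude $a\in(A^{[n]})^{\mathfrak h}$. As $\mathfrak h$ was arbitrary, $a\in{\cal N}_R(A^{[n]})$; the Laurent case uses the same argument since ${\rm frac}(A^{[\pm n]})$ is likewise purely transcendental over ${\rm frac}(A)$.

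I expect the crux to be the identity ${\rm Alg}_{R[A^*]}(A^{[n]})=A$ in part (c): recognizing that the hypothesis that $A$ is algebraic over $R[A^*]$ combines with the easy but essential fact that $A$ is algebraically closed in its own polynomial and Laurent rings to exhibit $A$ as precisely the algebraic closure of a homogeneous subring, so that Lemma~\ref{homogeneous}(b) applies and delivers the homogeneity of $A$ needed to restrict $\mathfrak h$. Without this step one only learns that $A$ sits inside some homogeneous subalgebra, which need not force $A$ itself to be homogeneous.
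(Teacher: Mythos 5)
Your proof is correct, and its overall strategy matches the paper's: part (a) follows from Lemma~\ref{homogeneous}(a) since an intersection of algebraically closed subrings is algebraically closed; part (b) plays weight gradings on the variables against degree-zero extensions of gradings of $A$; and part (c) identifies $A$ as ${\rm Alg}_{R[A^*]}(A^{[\pm n]})$, the algebraic closure of the homogeneous subring $R[A^*]$, so that Lemma~\ref{homogeneous}(b) makes $A$ itself homogeneous and the ambient grading restricts --- exactly the paper's argument, which you have spelled out in full (including the identity ${\rm Alg}_{R[A^*]}(A^{[\pm n]})=A$ that the paper leaves implicit). The one genuine divergence is in part (b), and it is to your credit. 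The paper uses a single grading $\mathfrak{g}$ with $\deg_{\mathfrak{g}}y_i=1$ for all $i$ and asserts that $f\notin A$ forces $\deg_{\mathfrak{g}}f\ne 0$; in the Laurent case with $n\ge 2$ this assertion is false as stated, since for example $y_1y_2^{-1}$ has degree zero but does not lie in $A$ --- indeed the degree-zero part of \emph{any} single $\Z$-grading of $A^{[\pm n]}$ over $A$ is strictly larger than $A$ once $n\ge 2$, because the weight map $\Z^n\to\Z$ has nontrivial kernel. Your device of intersecting the $n$ coordinate weight gradings $\mathfrak{w}^{(j)}$, whose degree-zero parts meet exactly in $A$, repairs this (an induction writing $A^{[\pm n]}=\bigl(A^{[\pm(n-1)]}\bigr)^{[\pm 1]}$ would work equally well). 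So your write-up is, at this point, more careful than the published one; everything else tracks the paper's proof step for step.
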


\begin{proof} Part (a) is implied by {\it Lemma~\ref{homogeneous}(a)}.

For part (b), let $C=A[y_1^{\pm 1},...,y_n^{\pm 1}]=A^{[\pm n]}$, and let $f\in {\cal N}_R(C)$ be given. Define $\mathfrak{g}\in C(\Z ,A)$ by setting $\deg_{\mathfrak{g}}y_i=1$ for each $i$. If $f\not\in A$, then $\deg_{\mathfrak{g}}f\ne 0$, a contradiction. Therefore, $f\in A$. Suppose that there exists $\mathfrak{h}\in A(\Z ,R)$ such that $\deg_{\mathfrak{h}}f\ne 0$. Then $\mathfrak{h}$ extends to $C$ by setting $\deg_{\mathfrak{h}}y_i=0$ for each $i$, meaning $f\not\in {\cal N}_R(C)$, again a contradiction. Therefore, $f\in {\cal N}_R(A)$. The argument is the same if $C=A^{[n]}$. 

For part (c), let $C=A[y_1^{\pm 1},...,y_n^{\pm 1}]=A^{[\pm n]}$, and let $f\in A$ be given. Suppose that $\mathfrak{g}\in C(\Z ,R)$ has $\deg_{\mathfrak{g}}f\ne 0$. Since every element of $A^*$ is $\mathfrak{g}$-homogeneous, it follows from {\it Lemma~\ref{homogeneous}(b)} that $A$ is a $\mathfrak{g}$-homogeneous subring of $C$. Therefore, $\mathfrak{g}$ restricts to an element of $A(\Z ,R)$ for which the degree of $f$ is non-zero, meaning that $f$ is not in ${\cal N}_R(A)$. 
The argument is the same if $C=A^{[n]}$. 
\end{proof}

\begin{example} {\rm 
Let $R$ be an integral domain, and define $A=R[x,y]/(x^2-y^3-1)$. Then ${\cal N}_R(A)=A$. To see this, let $\mathfrak{g}\in A(\Z ,R)$ be given. Set $K={\rm frac}(R)$ and define $A_K=K\otimes_RA$. 
Then $\mathfrak{g}$ extends to $A_K$, which is the coordinate ring of the plane curve $C:x^2-y^3=1$ over $K$. This $\Z$-grading induces an action of the torus 
$\T ={\rm Spec}(K[t,t^{-1}])$ on $A_K$, namely, if $a\in A_K$ is homogeneous of degree $d$, then $t\cdot a=t^da$. If this were a non-trivial action, then $C$ would contain $\T$ as a dense open orbit, implying that $C$ is $K$-rational, which it is not. Therefore, $\mathfrak{g}$ must be the trivial $\Z$-grading.}
\end{example}


\section{Laurent Polynomial Rings}

\subsection{Units and Automorphisms}

\begin{lemma}\label{units} Let $R$ be an integral domain, and let $A=R[y_1^{\pm 1},...,y_n^{\pm 1}]=R^{[\pm n]}$.
\medskip
\begin{itemize}
\item [{\bf (a)}] $A^*=R^*\cdot\{ y_1^{d_1}\cdots y_n^{d_n}\,|\, d_i\in\Z, i=1,...,n\}=R^*\cdot\Z^n$
\medskip
\item [{\bf (b)}]  Given $E\in SL_n(\Z)$, the $R$-morphism $\phi_E: A\to A$ given by
\[
\phi_E(y_i)=\prod_{1\le j\le n}y_j^{e_{ij}}
\]
where $E=(e_{ij})$, defines an action of $SL_n(\Z )$ on $A$ by $R$-automorphisms.
\medskip
\item [{\bf (c)}] Given $a=(a_1,...,a_n)\in (R^*)^n$, the $R$-morphism $\psi_a:A\to A$ given by
\[
\psi_a(y_i)=a_iy_i
\]
defines an action of $(R^*)^n$ on $A$ by $R$-automorphisms.
\end{itemize}
\end{lemma}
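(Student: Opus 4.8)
The plan is to treat part \textbf{(a)} as the core of the lemma and to derive parts \textbf{(b)} and \textbf{(c)} as essentially formal consequences of it. Part (a) identifies the units of a Laurent polynomial ring over a domain; once we know that every unit is a scalar from $R^*$ times a monomial, the prescriptions $\phi_E$ and $\psi_a$ automatically send each generator $y_i$ to a unit, and so extend to well-defined $R$-algebra endomorphisms of $A$. The group-action statements then follow from a short composition calculation on generators.

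For part (a) I would argue by induction on $n$. The case $n=0$ is the tautology $R^*=R^*$. For the inductive step write $A=B[y_n^{\pm 1}]$ with $B=R[y_1^{\pm 1},\dots,y_{n-1}^{\pm 1}]$, which is again an integral domain, and prove the one-variable statement $(B[y^{\pm1}])^*=B^*\cdot\{y^d : d\in\Z\}$. Given a unit $u$ with inverse $v$, write $u=\sum_{i=m}^{M}u_iy^i$ and $v=\sum_{j=p}^{Q}v_jy^j$ with $u_m,u_M,v_p,v_Q$ all nonzero. Because $B$ is a domain the extreme coefficients do not cancel: the top term of $uv$ is $u_Mv_Qy^{M+Q}$ and the bottom term is $u_mv_py^{m+p}$, both nonzero. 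Since $uv=1$ is a single monomial we must have $M+Q=m+p$; combined with $M\ge m$ and $Q\ge p$ this forces $M=m$ and $Q=p$, so $u=u_My^M$ is a monomial with $u_M\in B^*$. Substituting the inductive description $B^*=R^*\cdot\{y_1^{d_1}\cdots y_{n-1}^{d_{n-1}}\}$ into $u=u_My^M$ gives $A^*=R^*\cdot\Z^n$.

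For parts (b) and (c), part (a) guarantees that $\phi_E(y_i)=\prod_j y_j^{e_{ij}}$ and $\psi_a(y_i)=a_iy_i$ lie in $A^*$, so each prescription extends uniquely to an $R$-algebra homomorphism $A\to A$ (sending $y_i^{-1}$ to the inverse of the image of $y_i$). It then remains only to verify the group laws by composing on generators. A direct computation yields $\phi_E\circ\phi_F(y_i)=\prod_j y_j^{(FE)_{ij}}$, that is $\phi_E\circ\phi_F=\phi_{FE}$, together with $\phi_{I_n}=\mathrm{id}$; this shows each $\phi_E$ is invertible with inverse $\phi_{E^{-1}}$, hence an $R$-automorphism, and that $E\mapsto\phi_E$ is an anti-homomorphism of $SL_n(\Z)$ into $\mathrm{Aut}_R(A)$, giving the asserted action (pass to $E\mapsto\phi_{E^{-1}}$, or read it as a right action, if a left action is wanted). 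Likewise $\psi_a\circ\psi_b=\psi_{ab}$ with componentwise product and $\psi_{(1,\dots,1)}=\mathrm{id}$ give the action of $(R^*)^n$.

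The only genuine obstacle is the leading/trailing coefficient argument in part (a), and even there the single essential input is that $R$, hence each intermediate ring $B$, is an integral domain, so that the products of extreme coefficients cannot vanish. Everything in parts (b) and (c) is bookkeeping once (a) is in hand.
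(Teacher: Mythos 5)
Your proof is correct and takes essentially the same route as the paper: induction on $n$ reducing to the one-variable case, where the unit equation forces the extreme terms to coincide (the paper phrases this by writing $u=p(y)/y^k$, $u^{-1}=q(y)/y^l$ with $p(0),q(0)\ne 0$ and deducing $p(y)q(y)=y^{k+l}$ must have $k+l=0$), followed by composition checks on generators for (b) and (c). One point in your favor: your computation $\phi_E\circ\phi_F=\phi_{FE}$ is the accurate one — with the row convention $\phi_E(y_i)=\prod_j y_j^{e_{ij}}$ the assignment $E\mapsto\phi_E$ is an anti-homomorphism, i.e.\ a right action, whereas the paper asserts $\phi_{EF}=\phi_E\phi_F$; this is a harmless convention slip on the paper's side, and your remedy (pass to $E\mapsto\phi_{E^{-1}}$ or read it as a right action) is exactly right and changes nothing downstream, since only the fact that each $\phi_E$ is an $R$-automorphism is ever used.
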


\begin{proof} (a) By induction, it suffices to prove part (a) for the case $n=1$. 

Suppose $A=R[y,y^{-1}]=R^{[\pm 1]}$, and let $u\in A^*$ be given. Write $u=p(y)/y^k$ and $u^{-1}=q(y)/y^l$, where $p,q\in R[y]=R^{[1]}$; $p(0)\ne 0$ and $q(0)\ne 0$;  and $k,l\ge 0$. We thus have $p(y)q(y)=y^{k+l}$. If $k+l>0$, then $p(0)q(0)=0$, contradicting the fact that $A$ is an integral domain. Therefore, $k+l=0$, meaning that $p(y)q(y)=1$ in $R[y]$. 
Since $R[y]^*=R^*$, we see that $p(y)\in R^*$. This proves part (a)

(b) It is easy to check that, for $E,F\in SL_n(\Z)$, $\phi_{EF}=\phi_E\phi_F$.

(c) It is easy to check that, for $a,b\in (R^*)^n$, $\psi_{ab}=\psi_a\psi_b$. 
\end{proof}

\subsection{A Criterion for Cancellation}

\begin{proposition}\label{key1} Let $R$ be an integral domain, and let $A$ and $B$ be $R$-algebras. 
Assume that, for some $n\ge 0$, there exists an $R$-isomorphism
\[
F: A^{[\pm n]}\to B^{[\pm n]}
\]
such that $F(A^*)\subset B$. Then $A\cong_RB$. 
\end{proposition}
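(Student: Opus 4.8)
The plan is to use the hypothesis $F(A^*)\subset B$ to rigidify $F$ on the unit groups, then transport the standard grading of $B^{[\pm n]}$ back to $A^{[\pm n]}$ and read off the isomorphism. Write $A^{[\pm n]}=A[x_1^{\pm 1},\dots,x_n^{\pm 1}]$ and $B^{[\pm n]}=B[y_1^{\pm 1},\dots,y_n^{\pm 1}]$. By {\it Lemma~\ref{units}(a)} (applied with base ring $A$, resp.\ $B$), one has $(A^{[\pm n]})^*=A^*\cdot\{x^d\mid d\in\Z^n\}$ and likewise for $B$, and the multidegree map yields exact sequences $1\to A^*\to (A^{[\pm n]})^*\to\Z^n\to 0$ and $1\to B^*\to (B^{[\pm n]})^*\to\Z^n\to 0$. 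First I would upgrade $F(A^*)\subset B$ to $F(A^*)=B^*$: since $F$ sends units to units, $F(A^*)\subset (B^{[\pm n]})^*\cap B$, and a unit of $B^{[\pm n]}$ lying in $B$ has multidegree $0$, hence lies in $B^*$; thus $F(A^*)\subset B^*$. As $F$ restricts to an isomorphism of unit groups, it descends to a surjective endomorphism of $\Z^n$, which is necessarily an isomorphism; chasing this back shows $F^{-1}(B^*)=A^*$, so $F(A^*)=B^*$.

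Next I would normalize $F$ so that $F(x_i)=y_i$. Writing $F(x_i)=b_iy^{m_i}$ with $b_i\in B^*$ and $m_i\in\Z^n$, the induced automorphism of $\Z^n$ sends the standard basis to the rows of $M=(m_{ij})$, so $M\in GL_n(\Z)$. Composing $F$ with the automorphism $\phi_{M^{-1}}$ of {\it Lemma~\ref{units}(b)} (the formula there defines an action of all of $GL_n(\Z)$, not only $SL_n(\Z)$, since $\phi_E$ is invertible exactly when $E$ is) clears the exponent matrix, giving $F(x_i)=b_i'y_i$ for new units $b_i'\in B^*$; composing further with $\psi_a$ from {\it Lemma~\ref{units}(c)} for $a=((b_1')^{-1},\dots,(b_n')^{-1})$ yields $F(x_i)=y_i$. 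Both families of automorphisms fix $B$ pointwise, so after this reduction we still have $F(A^*)=B^*$ while now $F$ carries each $x_i$ to $y_i$.

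With $F$ normalized, let $\mathfrak{g}$ be the standard $\Z^n$-grading of $C:=B^{[\pm n]}$ by $y$-degree, so $C_0=B$, and transport it through $F$ to a grading $\mathfrak{g}'$ of $D:=A^{[\pm n]}$ defined by $\deg_{\mathfrak{g}'}(\alpha)=\deg_{\mathfrak{g}}(F(\alpha))$. Since $F$ is an $R$-isomorphism with $F(x_i)=y_i$, each $x_i$ is $\mathfrak{g}'$-homogeneous of degree $e_i$, we have $R\subset D_0$, and the degree-zero subalgebra satisfies $D_0=F^{-1}(C_0)=F^{-1}(B)\cong_R B$. Because the $x_i$ are homogeneous units whose degrees form the standard basis of $\Z^n$, one gets $D_d=D_0x^d$ for every $d\in\Z^n$, whence $D=D_0[x_1^{\pm 1},\dots,x_n^{\pm 1}]$ is itself a Laurent polynomial ring over $D_0$ in the same variables $x_i$.

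Finally I would quotient by the ideal $J=(x_1-1,\dots,x_n-1)$ of $D$. Reading $D=A[x^{\pm}]$ gives $D/J\cong_R A$, while reading $D=D_0[x^{\pm}]$ gives $D/J\cong_R D_0$; hence $A\cong_R D_0\cong_R B$, as desired. I expect the first stage to be the main obstacle: the hypothesis $F(A^*)\subset B$ is precisely what pins down $F$ on the unit groups, and the crux is leveraging it to obtain both $F(A^*)=B^*$ and the $GL_n(\Z)$-matrix, since without this rigidity $F$ could not be straightened to fix the variables and the transported grading $\mathfrak{g}'$ would not have the $x_i$ homogeneous of independent degrees.
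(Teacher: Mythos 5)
Your proof is correct and follows essentially the same route as the paper's: both rest on \emph{Lemma~\ref{units}}, use the hypothesis $F(A^*)\subset B$ to force the exponent matrix of $F$ on the variables to lie in $GL_n(\Z)$, straighten $F$ by composing with $\phi_E$ and $\psi_a$, and finish by reading the same Laurent polynomial ring over two coefficient rings and quotienting by the ideal generated by the variables minus $1$. The differences are cosmetic --- you derive unimodularity group-theoretically (a surjective endomorphism of $\Z^n$ is an isomorphism) and work on the $A$-side after normalizing $F$, whereas the paper computes $FF^{-1}(z_i)$ explicitly and works on the $B$-side --- though your observation that $\phi_E$ is defined for all $E\in GL_n(\Z)$ is genuinely needed and quietly repairs the paper's overstatement that $E\in SL_n(\Z)$, when the computation only yields $\det E=\pm 1$.
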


\begin{proof}  Let 
\[
C=A[y_1^{\pm 1},...,y_n^{\pm 1}]=A^{[\pm n]} \quad {\rm and}\quad D=B[z_1^{\pm 1},...,z_n^{\pm 1}]=B^{[\pm n]}
\]
By the preceding lemma, we have:
\[
C^*= A^*\cdot\{ y_1^{d_1}\cdots y_n^{d_n}\,|\, d_i\in\Z, i=1,...,n\}
 \quad {\rm and}\quad 
D^* = B^*\cdot\{ z_1^{e_1}\cdots z_n^{e_n}\,|\, e_i\in\Z, i=1,...,n\}
\]
Thus, given $i$ with $1\le i\le n$, there exist $b_i\in B^*$ and $e_{ik}\in\Z$ such that:
\[
F(y_i)=b_i\prod_{1\le k\le n}z_k^{e_{ik}}
\]
Likewise, there exist $a_i\in A^*$ and $d_{ij}\in\Z$ such that:
\[
F^{-1} (z_i)=a_i\prod_{1\le j\le n}y_j^{d_{ij}}
\]
Therefore, given $i$, we have:
\begin{eqnarray*}
z_i &=& FF^{-1}(z_i)\\
&=& F \left( a_i\prod_ky_k^{d_{ik}}\right) \\
&=& F(a_i)\prod_kF(y_k)^{d_{ik}} \\
&=& F(a_i)\prod_k\left( b_k\prod_jz_j^{e_{kj}}\right)^{d_{ik}} \\
&=& \left( F(a_i)\prod_kb_k^{d_{ik}}\right)\prod_k\prod_jz_j^{d_{ik}e_{kj}} \\
&=& \left( F(a_i)\prod_kb_k^{d_{ik}}\right)\prod_jz_j^{(\sum_kd_{ik}e_{kj})}
\end{eqnarray*}

Since $F(a_i)\prod_kb_k^{d_{ik}}\in B$ for each $i$, we conclude that, for each $i,j$ ($1\le i,j\le n$):
\[
F(a_i)\prod_kb_k^{d_{ik}}=1 \quad {\rm and}\quad \sum_kd_{ik}e_{kj}=\delta_{ij}
\]
It follows that, if $E$ is the $n\times n$ matrix $E=(e_{ij})$, then $E\in SL_n(\Z)$.  

Define $b=(b_1,...,b_n)\in (B^*)^n$. Then for each $i=1,...,n$ we see that:
\[
Z_i:=F(y_i)=\phi_E\psi_b(z_i)
\]
On one hand:
\[
D=\phi_E\psi_b(D)=B[Z_1^{\pm 1},...,Z_n^{\pm 1}]
\]
On the other hand:
\[
D=F(C)=F(A)[F(y_1)^{\pm 1},...,F(y_n)^{\pm 1}] = F(A) [ Z_1^{\pm 1},...,Z_n^{\pm 1}]
\]
Therefore, if $I\subset D$ is the ideal $I=(Z_1-1,...,Z_n-1)$, then:
\[
A\cong_RF(A)\cong_RD/I\cong_RB
\]
\end{proof}

Note that, if $A^*=R^*$, then  {\it Thm.~\ref{key1}} implies that $A$ has the Laurent cancellation property over $R$. In particular, every polynomial ring $A=R^{[n]}$ has the Laurent cancellation property over $R$. 

\subsection{A Characterization of Laurent Polynomial Rings over a Field} 

\begin{theorem}\label{dim-one} Let $R$ be an integral domain, and let $A$ be an $R$-algebra such that $R$ is algebraically closed in $A$, ${\rm tr.deg}_RA=1$, and $A^*\not\subset {\cal N}_R(A)$. 
\begin{itemize}
\item  [{\bf (a)}] There exists $u\in A^*$ such that $R[A^*]=R[u,u^{-1}]=R^{[\pm 1]}$.  
\medskip
\item [{\bf (b)}] There exist $r\in R$ and $w\in A_r^*$ such that 
$A_r=R_r[w,w^{-1}]=R_r^{[\pm 1]}$. 
\end{itemize}
\end{theorem}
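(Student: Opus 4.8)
The plan is to extract everything from a single $\Z$-grading $\mathfrak{g}\in A(\Z ,R)$ realizing the hypothesis $A^*\not\subset{\cal N}_R(A)$, using the standing fact (invoked in the proof of \emph{Lemma~\ref{neutral}}) that every unit of an integral domain is $\mathfrak{g}$-homogeneous. Fix such a $\mathfrak{g}$ together with a unit of nonzero $\mathfrak{g}$-degree. Because units are homogeneous, $\deg_{\mathfrak{g}}$ restricts to a group homomorphism $A^*\to\Z$ whose image is a nonzero subgroup $d\Z$ with $d\ge 1$; choose $u\in A^*$ with $\deg_{\mathfrak{g}}u=d$. Since $R\subset A^{\mathfrak{g}}=A_0$, every element of $R$ is $\mathfrak{g}$-homogeneous of degree $0$, so $u\notin R$; as $R$ is algebraically closed in $A$ this makes $u$ transcendental over $R$, and hence $R[u,u^{-1}]\cong_R R^{[\pm 1]}$.

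The crux of part (a) is the identity $A^{\mathfrak{g}}=A_0=R$. By \emph{Lemma~\ref{homogeneous}(a)}, $A_0$ is algebraically closed in $A$, so it suffices to see that every element of $A_0$ is algebraic over $R$, i.e. that ${\rm tr.deg}_RA_0=0$. If instead some $s\in A_0$ were transcendental over $R$, then $s$ and $u$ would be algebraically dependent (as ${\rm tr.deg}_RA=1$); writing a nontrivial relation $\sum_{i,j}c_{ij}s^iu^j=0$ with $c_{ij}\in{\rm frac}(R)$ and grouping the terms by $\mathfrak{g}$-degree (the terms with index $j$ having degree $jd$, since $\deg_{\mathfrak{g}}s=0$), each homogeneous component $(\sum_i c_{ij}s^i)u^j$ must vanish, forcing all $c_{ij}=0$ because $s$ is transcendental over ${\rm frac}(R)$ — a contradiction. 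Hence $A_0\subset R$, so $A_0=R$. Now any $v\in A^*$ has $\deg_{\mathfrak{g}}v=md$ for some $m\in\Z$, and $vu^{-m}$ is a unit of degree $0$, hence lies in $A_0^*=R^*$; therefore $A^*=R^*\cdot\{u^m\mid m\in\Z\}$ and $R[A^*]=R[u,u^{-1}]=R^{[\pm 1]}$, proving (a).

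For part (b) I would reduce to the case of a field by localizing at $S=R\setminus\{0\}$. The ring $A_K:=S^{-1}A$ is a domain with ${\rm frac}(A_K)={\rm frac}(A)$ and ${\rm tr.deg}_KA_K=1$, where $K={\rm frac}(R)$; moreover $K$ is algebraically closed in $A_K$ (if $a/s$ is algebraic over $K$ then $a$ is algebraic over $R$, so $a\in R$), and $\mathfrak{g}$ extends to $A_K$ with $(A_K)_0=S^{-1}A_0=K$ and with $u\in A_K^*$ still of degree $d$. The decisive step is to prove $A_K=K[u,u^{-1}]$. Since $(A_K)_0=K$ is a field and $u$ is a homogeneous unit, multiplication by $u^{-k}$ identifies $(A_K)_{kd}$ with $(A_K)_0$, so $(A_K)_{kd}=Ku^k$. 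Taking $d$ to be the minimal positive unit-degree, suppose the support of $A_K$ met some $e$ with $0<e<d$; then a nonzero $a\in(A_K)_e$ would satisfy $a^{d/e}\in(A_K)_d=Ku$, so $a^{d/e}$ is a unit and hence so is $a$, a unit of degree $e<d$ — impossible. Thus $A_K$ is supported on $d\Z$ and $A_K=\bigoplus_k Ku^k=K[u,u^{-1}]$.

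It remains to descend. Since $A\subset A_K$ is a graded inclusion and $A_K$ is supported on $d\Z$, so is $A$, and we may write $A_{kd}=M_ku^k$ for $R$-submodules $M_k\subset K$ with $M_0=R$ and $1\in M_k$ (as $u^{\pm k}\in A$). The product rule $M_kM_{-k}\subset M_0=R$ together with $1\in M_{-k}$ gives $M_k\subset R$, hence $M_k=R$ for all $k$; therefore $A=R[u,u^{-1}]$ outright, and (b) holds with $r=1$ and $w=u$. (A reader preferring a softer descent can instead clear the finitely many denominators in $K$ needed to write a generating set of $A$ inside $K[u,u^{-1}]$, producing an explicit $r\in R$ with $A_r=R_r[u,u^{-1}]$.) I expect the genuine obstacle to be this field-case structural step — excluding homogeneous components below the minimal unit-degree — since it is precisely where a possible algebraic extension of $R[u,u^{-1}]$ is shown not to enlarge the ring; the transcendence-degree computation giving $A_0=R$ is the second delicate point, and both steps rely on \emph{Lemma~\ref{homogeneous}}.
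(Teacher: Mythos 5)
Your part (a) is correct, and it takes a genuinely different and more economical route than the paper. The paper proves (a) by an iterative descent: given a unit $v\notin R[u,u^{-1}]$, it invokes the factorization of homogeneous polynomials in two variables (Lemma 4.6 of \cite{Freudenburg.06}) to produce a relation $ru^b+v^a=0$ and then a new unit $w=u^mv^n$ of strictly smaller positive degree, repeating until $R[A^*]=R[w,w^{-1}]$. You instead observe that, units being $\mathfrak{g}$-homogeneous, $\deg_{\mathfrak{g}}$ restricts to a group homomorphism $A^*\to\Z$ with image $d\Z$, prove $A^{\mathfrak{g}}=R$ (essentially the paper's own argument, written out via an explicit dependence relation), and conclude $A^*=R^*\cdot u^{\Z}$ for any unit $u$ of degree $d$. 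This avoids the factorization lemma and the induction entirely; it is a clean improvement for part (a).

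Part (b), however, is wrong, and not in a fixable detail: its conclusion, that $A=R[u,u^{-1}]$ always holds (so $r=1$, $w=u$), is false, which is exactly why the theorem localizes. There are two errors. First, $d/e$ need not be an integer; that one is repairable (use $a^d\in(A_K)_{ed}=Ku^e$ instead of $a^{d/e}$). Second, and fatally, your contradiction is no contradiction: the element $a$ you produce is a unit of $A_K=S^{-1}A$, not of $A$, whereas $d$ was defined as the minimal positive degree of a unit of $A$. Inverting $R\setminus\{0\}$ creates new units of smaller degree, and this is precisely the phenomenon part (b) must accommodate. Concretely, take $R=\kk[s]$ and $A=R[v,u,u^{-1}]/(v^2-su)$ with $\deg_{\mathfrak{g}}s=0$, $\deg_{\mathfrak{g}}v=1$, $\deg_{\mathfrak{g}}u=2$. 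One checks $A^{\mathfrak{g}}=R$, so $R$ is algebraically closed in $A$ by Lemma~\ref{homogeneous}(a); ${\rm tr.deg}_RA=1$; and $A^*=\kk^*\cdot u^{\Z}$, so $d=2$ and part (a) holds with this $u$. But $A_K=\kk(s)[v,v^{-1}]$ is supported on all of $\Z$, $A\ne R[u,u^{-1}]$, and $A\not\subset K[u,u^{-1}]$, so your ``softer descent'' fails as well (besides presuming a finite generating set, which $A$ need not have). Here part (b) holds only after localizing: $A_s=R_s[v,v^{-1}]$, with the new unit $w=v$, not a power of $u$. The paper's proof of (b) is built to handle exactly this situation: it takes a homogeneous $v\in A\setminus R[u,u^{-1}]$, derives $ru^b+v^a=0$ with $r\in R$ a non-unit, passes to $A_r$ where $v$ becomes a unit, and re-runs the descent of part (a) there, terminating because degrees strictly decrease. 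Your field-case argument (with the $a^d$ fix) does prove the implication needed for Theorem~\ref{character} when $R=\kk$ is a field, since then $A=A_K$ and there is no unit discrepancy; but for general $R$ the localization and the change of unit are unavoidable.
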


\begin{proof}
Let $u\in A^*$ be given such that $u\not\in {\cal N}_R(A)$. By {\it Lemma~\ref{neutral}(a)}, we see that $R = {\cal N}_R(A)$ and $R[u]=R^{[1]}$.

Let $\mathfrak{g}\in A(\Z ,R)$ be such that $u\not\in A^{\mathfrak{g}}$, and let $A=\oplus_{i\in\Z}A_i$ be the decomposition of $A$ into $\mathfrak{g}$-homogeneous summands. 
Since $R$ and $A^{\mathfrak{g}}$ are algebraically closed in $A$, it must be that either $A^{\mathfrak{g}}=R$  or $A^{\mathfrak{g}}=A$, and therefore $A^{\mathfrak{g}}=R$. 

If $R[A^*]=R[u,u^{-1}]$, there is nothing further to prove. So assume that $R[A^*]$ is strictly larger than $R[u,u^{-1}]$.
Let $v\in A^*$ be such that $v\not\in R[u,u^{-1}]$. Then $v$ is $\mathfrak{g}$-homogeneous, and $v\not\in A^{\mathfrak{g}}=R$. Since $v$ is algebraic over $R[u,u^{-1}]$, there exists
a non-trivial dependence relation $P(u,v)=0$, where $P\in R[x,y]=R^{[2]}$. 

Set $d=\gcd (\deg_{\mathfrak{g}}u,\deg_{\mathfrak{g}}v)$, and let $a,b\in\Z$ be such that:
\[
\deg_{\mathfrak{g}}u=ad\quad {\rm and}\quad \deg_{\mathfrak{g}}v=bd
\]
If $a<0$, replace $u$ by $u^{-1}$, and if $b<0$, replace $v$ by $v^{-1}$. In this way, we may assume that $a,b>0$. Define a $\Z$-grading $\mathfrak{h}$ of $R[x,y]$ over $R$ by setting $\deg_{\mathfrak{h}}x=a$ and $\deg_{\mathfrak{h}}y=b$. Then it suffices to assume that $P(x,y)$ is $\mathfrak{h}$-homogeneous. 

Let $K$ be the algebraic closure of ${\rm frac}(R)$. Consider $P(x,y)$ as an element of $K[x,y]$, and view $A$ as a subring of $K\otimes_RA$. By Lemma 4.6 of \cite{Freudenburg.06}, $P$ has the form:
\[
P=x^iy^j\prod_k(\alpha_kx^b+\beta_ky^a)\,\, ,\,\, \alpha_k,\beta_k\in K^*
\]
Since $P(u,v)=0$, it follows that $ru^b+ v^a=0$ for some $r\in K^*$. We see that $r = -u^{-b}v^a\in R^*$. Moreover, $a>1$, since otherwise $v\in R[u,u^{-1}]$. 

Let $m,n\in\Z$ be such that $am+bn=1$. Set $w=u^mv^n$, noting that $w\in A^*$ and $\deg_{\mathfrak{g}}w=d\ne 0$. Then:
\[
w^a=(-r)^nu \quad {\rm and}\quad w^b=(-r)^{-m}v
\]
It follows that:
\[
R[u^{\pm 1},v^{\pm 1}] = R[w^{\pm a},w^{\pm b}] = R[w,w^{-1}]
\]
If $R[A^*]=R[w,w^{-1}]$, the desired result holds. Otherwise, replace $u$ by $w$ and repeat the argument above. Since 
\[
\deg_{\mathfrak{g}}u=ad>d=\deg_{\mathfrak{g}}w>0
\]
this process must terminate in a finite number of steps. This completes the proof of part (a). 

The proof of part (b) is a continuation of the algorithm used in the proof of part (a), where units are adjoined where needed. 

Suppose that $R[A^*]=R[u,u^{-1}]$. If $R[A^*]=A$, there is nothing further to prove. So assume that $R[A^*]\ne A$, and choose $\mathfrak{g}$-homogeneous $v\in A$ not in $R[u,u^{-1}]$. As before, we obtain an equation $ru^b+v^a=0$, where $r\in R$, and $a,b$ are relatively prime integers with $a>1$. However, in this case $r\not\in R^*$, since otherwise $v$ is a unit. 

In order to continue the algorithm, we extend $\mathfrak{g}$ to the ring $A_r$, noting that $v\in A_r^*$.  As above, there exists $w\in A_r^*$ such that 
$0<\deg_{\mathfrak{g}}w<\deg_{\mathfrak{g}}u$ and $R_r[A_r^*]=R_r[w,w^{-1}]$. 

If $A_r=R_r[w,w^{-1}]$, the desired result holds. Otherwise, replace $u$ by $w$ and repeat the argument. As before, since a strict decrease in degrees takes place, the process must terminate in a finite number of steps. This completes the proof of part (b). 
\end{proof}

As a consequence of this theorem, we obtain the following characterization of Laurent polynomial rings over a field. 

\begin{theorem}\label{character} Let $\kk$ be a field, and let $A$ be a $\kk$-algebra. The following are equivalent.
\begin{itemize}
\item [1.] $A=\kk^{[\pm 1]}$.
\smallskip
\item [2.] The following three conditions hold. 
\subitem {\rm (a)} $\kk$ is algebraically closed in $A$
\subitem {\rm (b)} ${\rm tr.deg}_{\kk}A=1$
\subitem {\rm (c)} $A^*\not\subset {\cal N}_{\kk}(A)$
\end{itemize}
\end{theorem}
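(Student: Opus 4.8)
The plan is to prove the equivalence by establishing both directions, with the substantive content lying in the implication $(2)\Rightarrow(1)$, since $(1)\Rightarrow(2)$ is essentially a verification. For $(1)\Rightarrow(2)$, I would write $A=\kk[u,u^{-1}]=\kk^{[\pm 1]}$ and check the three conditions directly. Condition (b) is immediate since $u$ is transcendental over $\kk$. For condition (c), the grading $\deg_{\mathfrak{g}}u=1$ gives a nontrivial $\Z$-grading over $\kk$ in which $u\in A^*$ has nonzero degree, so $u\notin {\cal N}_{\kk}(A)$ and hence $A^*\not\subset{\cal N}_{\kk}(A)$. For condition (a), I would argue that if $t\in A$ is algebraic over $\kk$, then $t$ is algebraic over $\kk$ inside $\kk(u)=\mathrm{frac}(A)$; since $\kk$ is already algebraically closed inside the rational function field $\kk(u)$ (a purely transcendental extension adds no new algebraic elements over $\kk$), we get $t\in\kk$.

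For the main implication $(2)\Rightarrow(1)$, the key is to invoke \emph{Thm.~\ref{dim-one}(a)} with $R=\kk$. Since $\kk$ is a field, the three hypotheses of that theorem are exactly conditions (a), (b), (c), so the theorem applies and produces $u\in A^*$ with $\kk[A^*]=\kk[u,u^{-1}]=\kk^{[\pm 1]}$. It then remains to upgrade this from $\kk[A^*]$ to all of $A$, i.e.\ to show $A=\kk[A^*]$. The natural route is to apply \emph{Thm.~\ref{dim-one}(b)}: over a field $\kk$, the localization $\kk_r$ for any nonzero $r\in\kk$ is just $\kk$ again (every nonzero element of a field is a unit), so part (b) yields $A=A_r=\kk[w,w^{-1}]=\kk^{[\pm 1]}$ directly. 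This is cleaner than trying to bridge the gap between part (a) and $A$ by hand.

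I expect the main obstacle to be the transition from $\kk[A^*]$ to $A$, and the resolution hinges on the observation that the localization machinery in \emph{Thm.~\ref{dim-one}(b)} collapses when $R=\kk$ is a field. In the general integral-domain setting of \emph{Thm.~\ref{dim-one}}, part (b) must pass to a localization $A_r$ because the adjoined element $v$ satisfies $ru^b+v^a=0$ with $r\notin R^*$ a genuine non-unit; but when $R=\kk$, every nonzero $r\in\kk$ is already invertible, so no localization is needed and we obtain $A$ itself as a Laurent polynomial ring. Thus the whole force of the implication is packaged inside \emph{Thm.~\ref{dim-one}}, and the proof of \emph{Thm.~\ref{character}} reduces to correctly specializing $R$ to a field and reading off the conclusion. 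The only care required is to confirm that part (b)'s output ring $\kk[w,w^{-1}]$ is genuinely isomorphic to $\kk^{[\pm 1]}$, which follows because $w$ is transcendental over $\kk$ (as $\mathrm{tr.deg}_{\kk}A=1$ and $w\notin\kk$).
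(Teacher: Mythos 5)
Your proposal is correct and takes exactly the paper's route: the paper gives no separate proof of this theorem but states it as an immediate consequence of Thm.~\ref{dim-one}, and your argument---verifying $(1)\Rightarrow(2)$ directly and specializing Thm.~\ref{dim-one}(b) to $R=\kk$, where localization at a nonzero $r\in\kk$ is trivial since $r\in\kk^*$---is precisely that intended specialization, spelled out. Your observation that the non-unit case $r\notin R^*$ in the proof of part (b) cannot arise over a field is also the right way to see why the algorithm collapses to give $A=\kk[w,w^{-1}]$ on the nose.
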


\begin{corollary}\label{key2} Let $\kk$ be a field, and let $A$ be a $\kk$-algebra. Assume that $\kk$ is algebraically closed in $A$. Given $u\in A^*$, if $u\not\in {\cal N}_{\kk}(A)$, then there exists $w\in A^*$ such that:
\[
{\rm Alg}_{\kk [u]}A=\kk [w,w^{-1}]=\kk^{[\pm 1]}
\]
\end{corollary}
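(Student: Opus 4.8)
The plan is to apply \emph{Thm.~\ref{character}} to the ring $B := \mathrm{Alg}_{\kk[u]}A$, viewed as a $\kk$-algebra; once we know $B = \kk^{[\pm 1]}$, the image $w$ of a Laurent variable under the resulting $\kk$-isomorphism is a unit of $B$, hence of $A$, and satisfies $\kk[w,w^{-1}] = B = \mathrm{Alg}_{\kk[u]}A$. So the entire task reduces to verifying the three conditions 2(a)--(c) of \emph{Thm.~\ref{character}} for $B$. A preliminary remark: since $\kk \subset {\cal N}_{\kk}(A)$ and ${\cal N}_{\kk}(A)$ is algebraically closed in $A$ by \emph{Lemma~\ref{neutral}(a)}, the hypothesis $u \notin {\cal N}_{\kk}(A)$ forces $u$ to be transcendental over $\kk$; otherwise $u \in \mathrm{Alg}_{\kk}A \subset {\cal N}_{\kk}(A)$.

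Conditions 2(a) and 2(b) then follow immediately. Any element of $B \subset A$ algebraic over $\kk$ is algebraic over $\kk$ inside $A$, hence lies in $\kk$ because $\kk$ is algebraically closed in $A$; this gives (a). And $B$ is algebraic over $\kk[u]$ with $u$ transcendental over $\kk$, so $\mathrm{tr.deg}_{\kk} B = \mathrm{tr.deg}_{\kk} \kk[u] = 1$, giving (b).

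The substance of the argument lies in condition 2(c), and this is where the homogeneity machinery of Section~2 enters. Since $u \notin {\cal N}_{\kk}(A)$, choose $\mathfrak{g} \in A(\Z,\kk)$ with $\deg_{\mathfrak{g}} u \neq 0$. As $u$ is a unit of the integral domain $A$, it is $\mathfrak{g}$-homogeneous, so the subalgebra $\kk[u]$ --- generated by $u$ together with the degree-zero subfield $\kk$ --- is $\mathfrak{g}$-homogeneous. By \emph{Lemma~\ref{homogeneous}(b)}, $B = \mathrm{Alg}_{\kk[u]}A$ is then a $\mathfrak{g}$-homogeneous subalgebra of $A$, so $\mathfrak{g}$ restricts to a $\Z$-grading of $B$ over $\kk$, under which $u$ still has nonzero degree. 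Noting that $u^{-1} \in A$ lies in $\mathrm{Alg}_{\kk[u]}A$, we have $u \in B^*$, so $u \in B^* \setminus {\cal N}_{\kk}(B)$ and therefore $B^* \not\subset {\cal N}_{\kk}(B)$, establishing (c). With all three conditions in hand, \emph{Thm.~\ref{character}} yields $B = \kk^{[\pm 1]}$ and hence the desired $w$.

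I expect the main obstacle to be the verification of (c). One must confirm both that units are $\mathfrak{g}$-homogeneous and, more essentially, that the grading $\mathfrak{g}$ witnessing $u \notin {\cal N}_{\kk}(A)$ descends to the potentially much larger ring $\mathrm{Alg}_{\kk[u]}A$ without killing the degree of $u$. \emph{Lemma~\ref{homogeneous}(b)} is precisely the tool that legitimizes this descent, and recognizing $\kk[u]$ as a $\mathfrak{g}$-homogeneous subalgebra is the step that lets it apply.
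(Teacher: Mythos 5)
Your proposal is correct and follows essentially the same route as the paper's proof: pass to $B={\rm Alg}_{\kk[u]}A$, use homogeneity of the unit $u$ and \emph{Lemma~\ref{homogeneous}(b)} to restrict the grading witnessing $u\notin{\cal N}_{\kk}(A)$ to $B$, and apply \emph{Thm.~\ref{character}}. Your write-up additionally spells out the verification of conditions 2(a) and 2(b) (via transcendence of $u$ over $\kk$), which the paper leaves implicit.
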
 

\begin{proof} By hypothesis, there exists $\mathfrak{g}\in A(\Z ,\kk )$ such that $\deg_{\mathfrak{g}}u\ne 0$. 
Set $B={\rm Alg}_{\kk [u]}A$. Since $u$ is a unit, $u$ is $\mathfrak{g}$-homogeneous, and $\kk [u]$ is a $\mathfrak{g}$-homogeneous subring. By {\it Lemma~\ref{homogeneous}(b)}, it follows that $B$ is $\mathfrak{g}$-homogeneous, meaning that $\mathfrak{g}$ restricts to $B$. Since $\deg_{\mathfrak{g}}u\ne 0$, we see that $u\not\in {\cal N}_{\kk}(B)$. The result now follows from {\it Thm.~\ref{character}}.
\end{proof}


\begin{remark} {\rm In {\it Thm.~\ref{character}}, one cannot generally replace the field $\kk$ by a ring $R$ which is not a field. For example, 
define $A=\kk [u,u^{-1},v]$, where $u,v$ are algebraically independent over $\kk$. Define $\mathfrak{g}\in A(\Z ,\kk)$ by declaring that $\deg_{\mathfrak{g}}u=2$ and $\deg_{\mathfrak{g}}v=1$, and define:
\[
R=A^{\mathfrak{g}}=\kk [u^{-1}v^2]
\]
Then $R$ is algebraically closed in $A$, ${\rm tr.deg}_RA=1$, and $u$ is a unit not in ${\cal N}_R(A)$. However, $A\ne R^{[\pm 1]}$ since the units of $A$ are of the form $\lambda u^n$ for $\lambda\in\kk^*$ and $n\in\Z$.  } 
\end{remark}


\section{Laurent Cancellation}

\subsection{A Reduction}\label{reduce} Let $R$ be an integral domain, and let $A$ be an $R$-algebra. 
If $n\ge 0$, then since $A$ is algebraically closed in $A^{[\pm n]}$ we have: 
\[
{\rm Alg}_R(A^{[\pm n]})={\rm Alg}_R(A)
\]
Let $\alpha :A^{[\pm n]}\to B^{[\pm n]}$ be an isomorphism of $R$-algebras. If $S={\rm Alg}_R(A)$, then $\alpha (S)={\rm Alg}_R(B)$, since $B$ is algebraically closed in $B^{[\pm n]}$. Therefore, identifying $S$ and $\alpha (S)$, we can view $A$ and $B$ as $S$-algebras, and $\alpha$ as an $S$-isomorphism. In considering the question of Laurent cancellation, it thus suffices to assume $R$ is algebraically closed in $A$. Note that this condition implies the group $A^*/R^*$ is torsion free. 

\subsection{Cancellation for Laurent Polynomial Rings}

The following result is due to Bhatwadekar and N. Gupta. 

\begin{theorem}\label{thm-4} {\rm (\cite{Bhatwadekar.Gupta.12}, Lemma 4.5)} Let $R$ be an integral domain, and let $A$ be an $R$-algebra. Suppose that $m,n$ are non-negative integers such that:
\[
A^{[\pm n]}\cong_RR^{[\pm (m+n)]}
\]
Then $A\cong_RR^{[\pm m]}$.
\end{theorem}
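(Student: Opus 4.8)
The plan is to transport the property ``generated over $R$ by units'' across the given isomorphism, push it down to $A$ by extracting a degree-zero part, and then upgrade ``generated by finitely many units'' to ``is a Laurent polynomial ring'' via a transcendence-degree count.

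First I would fix notation. Write $C=A^{[\pm n]}=A[y_1^{\pm 1},\dots,y_n^{\pm 1}]$ and $D=R^{[\pm(m+n)]}$, and let $F\colon C\to D$ be the given $R$-isomorphism. By {\it Lemma~\ref{units}(a)}, applied with base ring $A$ to $C$ and with base ring $R$ to $D$, one has $C^*=A^*\cdot\{y_1^{d_1}\cdots y_n^{d_n}\mid d_i\in\Z\}$ and $D^*=R^*\cdot\Z^{m+n}$; in particular $D$ is generated over $R$ by its units, $D=R[D^*]$. Since $F$ is an $R$-isomorphism it fixes $R$ and carries $D^*$ onto $C^*$, so $C=F^{-1}(R[D^*])=R[C^*]$. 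Thus $A^{[\pm n]}$ is generated over $R$ by its units.

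Next I would descend this to $A$. Equip $C$ with its $\Z^n$-grading by $y$-degrees, in which $A$ lies in degree $0$. Every element of $C^*$ has the form $a\,y_1^{d_1}\cdots y_n^{d_n}$ with $a\in A^*$, hence is homogeneous, so $R[C^*]=R[A^*][y_1^{\pm 1},\dots,y_n^{\pm 1}]$. Viewed inside $C=A[y_1^{\pm 1},\dots,y_n^{\pm 1}]$, this subring consists of all Laurent polynomials in the $y_i$ whose coefficients lie in $R[A^*]$. The equality $R[C^*]=C$ then forces, upon comparing coefficient rings (equivalently, degree-zero components of the grading), that $R[A^*]=A$. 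So $A=R[A^*]$ is generated over $R$ by its own units.

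It remains to pin down the structure of $A^*$ and to extract algebraic independence. As groups, $C^*=A^*\times\Z^n$, and since $F$ induces an isomorphism $C^*/R^*\cong D^*/R^*\cong\Z^{m+n}$, I get $(A^*/R^*)\times\Z^n\cong\Z^{m+n}$; being a complemented summand of a free abelian group, $A^*/R^*$ is free abelian of rank $m$. Lifting a basis to $u_1,\dots,u_m\in A^*$ and using that the extension $1\to R^*\to A^*\to A^*/R^*\to 1$ splits, I obtain $A=R[A^*]=R[u_1^{\pm 1},\dots,u_m^{\pm 1}]$. Finally ${\rm tr.deg}_R A={\rm tr.deg}_R A^{[\pm n]}-n=(m+n)-n=m$, so the $m$ generators $u_i$ must be algebraically independent over $R$; hence the evaluation surjection $R^{[\pm m]}\to A$, $x_i\mapsto u_i$, has trivial kernel and $A\cong_R R^{[\pm m]}$.

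I expect the main obstacle to be the middle two steps rather than the bookkeeping. Transporting generation-by-units is immediate, but one must argue that it descends to $A$ \emph{exactly} (not to some larger or smaller ring), which is precisely what the homogeneity of units and the degree-zero comparison secure; and one must verify that $A^*$, a priori only an extension of a free group by $R^*$, is genuinely free of rank $m$ so that a generating set of units can be chosen. The transcendence-degree count is the final lever that converts ``$A$ is generated by $m$ units over $R$'' into ``$A$ is a Laurent polynomial ring in $m$ variables.'' (This route is self-contained given {\it Lemma~\ref{units}}; one could alternatively package the last step through {\it Prop.~\ref{key1}}, but the direct degree-zero argument avoids constructing an auxiliary unit-preserving isomorphism.)
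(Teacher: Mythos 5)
Your proof is correct. It shares the paper's two main levers --- the unit-group computation of Lemma~\ref{units}(a) and a final transcendence-degree count --- but the middle of the argument runs along a genuinely different track. The paper reduces by induction to $n=1$, picks generators $w_1,\dots,w_m$ of $A^*/R^*$, and invokes the $SL_{m+1}(\Z)$-automorphisms of Lemma~\ref{units}(b) to rewrite $D$ as $R[\alpha(w_1)^{\pm 1},\dots,\alpha(w_m)^{\pm 1},\alpha(y)^{\pm 1}]$; pulling back gives $A[y^{\pm 1}]=R[w_1^{\pm 1},\dots,w_m^{\pm 1},y^{\pm 1}]$, and only at the very end does it descend to $A$ by passing to the quotient by $(y-1)$. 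You instead treat all $n$ at once: you transport the trivial identity $D=R[D^*]$ across $F$ to get $C=R[C^*]$, then descend to $A$ immediately by comparing degree-zero components of the $\Z^n$-grading, obtaining $A=R[A^*]$ \emph{before} knowing anything about the structure of $A^*$; identifying $A^*/R^*$ as a rank-$m$ direct summand of $\Z^{m+n}$ and using the evaluation map then finishes the argument. Your route buys the absence of induction, of the $SL(\Z)$ machinery, and of the specialization step --- only Lemma~\ref{units}(a) is needed --- while the paper's route exhibits explicit Laurent coordinates on $C$ adapted to the isomorphism $\alpha$ (closer in spirit to the Bhatwadekar--Gupta structure theory it cites), at the cost of carrying the auxiliary variable $y$ through to the final quotient. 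Both proofs hinge on the same two facts, so this is a difference of mechanism rather than of principle, but your degree-zero extraction is a clean and slightly more self-contained alternative to the paper's change-of-coordinates-plus-specialization.
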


\begin{proof} By induction, it suffices to prove the case $n=1$. Let
\[
C=A[y,y^{-1}]=A^{[\pm 1]} \quad {\rm and}\quad D=R[z_1^{\pm 1},...,z_{m+1}^{\pm 1}]=R^{[\pm (m+1)]}
\]
and let $\alpha :C\to D$ be an $R$-isomorphism. We have:
\[
A^*\cdot\Z=C^*=\alpha^{-1}(D^*)=R^*\cdot\Z^{m+1} \quad\Rightarrow\quad  A^*=R^*\cdot\Z^m
\]
If $w_1,...,w_m$ generate the group $A^*/R^*$, then:
\[
D^*/R^*=\langle z_1,...,z_{m+1}\rangle = \langle \alpha (w_1),...,\alpha (w_m),\alpha (y)\rangle
\]
So there exists $E\in SL_{m+1}(\Z)$ such that:
\[
\phi_E(z_i)=\alpha (w_i) \,\, (1\le i\le m)\quad  {\rm and}\quad  \phi_E(z_{m+1})=\alpha (y)
\]
We have:
\[
D=R[z_1^{\pm 1},...,z_{m+1}^{\pm 1}] = R[\phi_E(z_1)^{\pm 1},...,\phi_E(z_{m+1})^{\pm 1}] = R[\alpha (w_1)^{\pm 1},...,\alpha (w_m)^{\pm 1},\alpha (y)^{\pm 1}]
\]
Therefore:
\begin{equation}\label{torus}
A[y^{\pm 1}]=C=\alpha^{-1}(D)=R[w_1^{\pm 1},...,w_m^{\pm 1},y^{\pm 1}]
\end{equation}
Since ${\rm tr.deg}_RC=m+1$, we see that $w_1,...,w_m,y$ are algebraically independent over $R$. 
From line (\ref{torus}), it follows that:
\[
A\cong_RA[y^{\pm 1}]/(y-1)\cong_RR[w_1^{\pm 1},...,w_m^{\pm 1}]=R^{[\pm m]}
\] 
\end{proof}

\subsection{Algebras of Transcendence Degree One over a Field}

For polynomial rings of transcendence degree one over a field, we recall the result of Abhyankar, Eakin, and Heinzer cited in the {\it Introduction}. 

\begin{theorem} {\rm (\cite{Abhyankar.Eakin.Heinzer.72}, 3.3)} Let $\kk$ be a field, and let $A$ be a $\kk$-algebra of transcendence degree one over $\kk$.  Let $B$ be a $\kk$-algebra such that $A^{[n]}\cong_{\kk}B^{[n]}$ for some $n\ge 0$. Then $A\cong_{\kk}B$.
\end{theorem}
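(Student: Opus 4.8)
The plan is to run Makar-Limanov's LND argument with the neutral subalgebra ${\cal N}_{\kk}$ in place of the Makar-Limanov invariant, exploiting two invariants carried by an isomorphism $\Phi:A^{[n]}\to B^{[n]}$: the unit group and ${\cal N}_{\kk}$. First I would reduce, exactly as in Subsection~\ref{reduce}, to the case that $\kk$ is algebraically closed in $A$: writing $L={\rm Alg}_{\kk}A={\rm Alg}_{\kk}(A^{[n]})$, the map $\Phi$ sends $L$ onto ${\rm Alg}_{\kk}(B^{[n]})={\rm Alg}_{\kk}B$, so I may regard $A,B$ as $L$-algebras and $\Phi$ as an $L$-isomorphism, with $L$ algebraically closed in each and ${\rm tr.deg}_LA={\rm tr.deg}_LB=1$. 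Two observations drive the rest. Since $A$ and $B$ are domains, $(A^{[n]})^*=A^*$ and $(B^{[n]})^*=B^*$, so $\Phi$ restricts to a group isomorphism $A^*\cong B^*$; in particular $A^*=L^*$ if and only if $B^*=L^*$. And since ${\cal N}_L$ is intrinsic, ${\cal N}_L(A^{[n]})\cong{\cal N}_L(B^{[n]})$.

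Next I would dispose of the case $A^*\neq L^*$. A nonconstant unit of $A$ is transcendental over $L$ (as $L$ is algebraically closed in $A$), so $A$ is algebraic over $L[A^*]$, and Lemma~\ref{neutral}(c) gives ${\cal N}_L(A^{[n]})={\cal N}_L(A)$; the same holds for $B$, which also has a nonconstant unit. Now ${\cal N}_L(A)$ is algebraically closed in $A$ (Lemma~\ref{neutral}(a)) and contains $L$, so by the transcendence-degree-one hypothesis it is either $L$ or $A$. If ${\cal N}_L(A)=L$, then $A^*\not\subset{\cal N}_L(A)$ and Theorem~\ref{character} gives $A=L^{[\pm 1]}$; here ${\cal N}_L(A^{[n]})=L$, so ${\cal N}_L(B^{[n]})=L$, forcing ${\cal N}_L(B)=L$ and hence (as $B$ has a nonconstant unit) $B=L^{[\pm 1]}\cong A$. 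If instead ${\cal N}_L(A)=A$, then $A$ is rigid and ${\cal N}_L(A^{[n]})=A$; the isomorphism yields $A\cong{\cal N}_L(B^{[n]})={\cal N}_L(B)$, which has transcendence degree one and so must equal $B$. Either way $A\cong_LB$.

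The remaining case $A^*=L^*=B^*$ is the heart of the theorem and where I expect the real difficulty. Here $L[A^*]=L$ while $A$ is transcendental over $L$, so Lemma~\ref{neutral}(c) no longer applies and ${\cal N}_L$ need not transport from $A$ to $A^{[n]}$; worse, ${\cal N}_L$ is too coarse to separate the relevant curves (for instance $L^{[1]}$ and a cuspidal curve both have trivial units and admit nontrivial $\Z$-gradings, giving ${\cal N}_L=L$ for both), so the invariants from the preceding sections do not by themselves close the argument. To finish, I would fall back on the structure theory of one-dimensional affine $L$-algebras underlying the original proof: the normalization, the geometric genus, and the set of places at infinity of ${\rm frac}(A)$ must be recovered from $A^{[n]}$, and these data together with $A^*=L^*$ determine $A$ up to $L$-isomorphism. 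The one clean subcase is $A=L^{[1]}$: then $B^{[n]}\cong L^{[n+1]}$ with ${\rm tr.deg}_LB=1$ and $B^*=L^*$, and one shows $B$ is rational with a single place at infinity and no nonconstant units, whence $B\cong L^{[1]}$. Proving that the genus and the places at infinity survive the passage to $A^{[n]}$ — the grading-language analog of showing that every $\Z$-grading of $A^{[n]}$ fixes $A$ when $A$ is rigid with trivial units — is the step I expect to be the main obstacle.
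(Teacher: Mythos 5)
Your reduction to $L={\rm Alg}_{\kk}A$ and your treatment of the case $A^*\neq L^*$ are correct and use the paper's machinery well: $(A^{[n]})^*=A^*$ for a domain, Lemma~\ref{neutral}(c) applies because $A$ is then algebraic over $L[A^*]$, and the dichotomy ${\cal N}_L(A)=L$ or ${\cal N}_L(A)=A$ (from Lemma~\ref{neutral}(a) together with ${\rm tr.deg}_LA=1$) lets you conclude via Theorem~\ref{character} in the first subcase and by transporting ${\cal N}_L$ through the isomorphism in the second. But note that the paper itself does not prove this theorem at all --- it is quoted from Abhyankar--Eakin--Heinzer --- and for a structural reason: the paper's tools (unit groups and $\Z$-gradings) have traction exactly when units are plentiful, which the Laurent setting guarantees and the polynomial setting does not.

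That is why your remaining case $A^*=L^*$ is a genuine gap rather than a loose end. This case carries essentially the whole content of the theorem: $A=\kk^{[1]}$, and every affine curve of positive genus or with several places at infinity, has only constant units, so the cases you settled are the exceptional ones. Your own example (the cuspidal curve versus $L^{[1]}$) shows that ${\cal N}_L$ cannot separate the relevant rings here, and Lemma~\ref{neutral}(c) is unavailable since $A$ is transcendental over $L[A^*]=L$, so nothing in the paper transports information from $A^{[n]}$ back to $A$. The fallback you sketch --- that the genus, the places at infinity, and the normalization of ${\rm frac}(A)$ can be recovered from $A^{[n]}$ --- is precisely the hard step of the AEH argument (or, over $\C$, of Makar-Limanov's LND proof), and you give no argument for it; even the one subcase you isolate, that $B^{[n]}\cong_L L^{[n+1]}$ with ${\rm tr.deg}_LB=1$ forces $B\cong_L L^{[1]}$, is itself a nontrivial cancellation theorem which you assert rather than prove. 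As it stands, the proposal establishes the theorem only when $A$ has a unit transcendental over $\kk$, and defers the core case to the very result being cited.
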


For Laurent polynomial rings over rings of transcendence degree one over a field, we have the following.

\begin{theorem}\label{thm-2} Let $R$ be an integral domain, and let $A$ be an $R$-algebra with ${\rm tr.deg}_RA=1$. Then $A$ has the Laurent cancellation property over $R$ if any one of the following conditions holds.
\begin{itemize}
\item [{\bf (a)}] $R[A^*]$ is algebraic over $R$
\medskip
\item [{\bf (b)}] $A^*\subset {\cal N}_R(A)$
\medskip
\item [{\bf (c)}] $R$ is a field
\end{itemize}
\end{theorem}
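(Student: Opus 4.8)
\documentclass{amsart}

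The plan is to reduce all three cases to \emph{Prop.~\ref{key1}}, whose hypothesis is that the given isomorphism $F:A^{[\pm n]}\to B^{[\pm n]}$ can be arranged to satisfy $F(A^*)\subset B$. By the reduction of Section~\ref{reduce}, I may assume throughout that $R$ is algebraically closed in $A$, and (identifying algebraic closures) also in $B$. So the entire task is to understand where $F$ sends the units of $A$, and to show that after composing $F$ with a suitable automorphism of $B^{[\pm n]}$ of the type furnished by \emph{Lemma~\ref{units}(b),(c)}, the units of $A$ land inside $B$.

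First I would dispose of case \textbf{(a)}. If $R[A^*]$ is algebraic over $R$ and $R$ is algebraically closed in $A$, then in fact $A^*\subset R$, so $A^*=R^*$. Then by \emph{Lemma~\ref{units}(a)} we have $(A^{[\pm n]})^*=R^*\cdot\Z^n$ and similarly for $B$, and the remark following \emph{Prop.~\ref{key1}} applies directly: $A^*=R^*$ forces $F(A^*)=F(R^*)=R^*\subset B$, so $A\cong_R B$. For case \textbf{(b)}, the hypothesis $A^*\subset {\cal N}_R(A)$ combines with \emph{Lemma~\ref{neutral}(a)} (that ${\cal N}_R(A)$ is algebraically closed in $A$) and the running assumption to again pin down $A^*$; I would argue that ${\cal N}_R(A^{[\pm n]})$ controls where units must go under $F$, using \emph{Lemma~\ref{neutral}(b)}. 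The key point is that $F$ carries ${\cal N}_R(A^{[\pm n]})$ isomorphically onto ${\cal N}_R(B^{[\pm n]})$, so the ``neutral'' units of $A$ map to elements of $B$.

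The substantive case is \textbf{(c)}, where $R=\kk$ is a field. Here $A^*$ need not be trivial, so I must handle genuine Laurent behavior. The idea is to split according to whether $A^*\subset {\cal N}_{\kk}(A)$ or not. If $A^*\subset {\cal N}_{\kk}(A)$, case (b) already applies. If not, then \emph{Thm.~\ref{character}} (equivalently \emph{Thm.~\ref{dim-one}(a)}) tells me that $\kk[A^*]=\kk[u,u^{-1}]=\kk^{[\pm 1]}$ for some $u\in A^*$, and more usefully \emph{Cor.~\ref{key2}} gives a $w\in A^*$ with ${\rm Alg}_{\kk[u]}A=\kk[w,w^{-1}]$. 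Since ${\rm tr.deg}_{\kk}A=1$, every unit of $A$ is algebraic over $\kk[u]$, so $A^*\subset \kk[w,w^{-1}]$; thus $A^*$ is, up to $\kk^*$, an infinite cyclic group generated by $w$. The plan is then to compute $(A^{[\pm n]})^*=\kk^*\cdot\langle w\rangle\cdot\Z^n\cong \kk^*\cdot\Z^{n+1}$ via \emph{Lemma~\ref{units}(a)}, and similarly analyze $B$: since $A^{[\pm n]}\cong_{\kk} B^{[\pm n]}$ forces the unit groups modulo $\kk^*$ to be isomorphic free abelian groups of the same rank $n+1$, $B$ must likewise be generated over $\kk^*$ by one nonconstant unit class plus the $z_i$'s.

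The main obstacle I anticipate is the final bookkeeping: showing that the one ``extra'' unit generator $F(w)\in B^{[\pm n]}$ can be moved into $B$ by an automorphism. Concretely, $F(w)$ is a unit of $B^{[\pm n]}$, hence of the form $\beta\, z_1^{c_1}\cdots z_n^{c_n}$ with $\beta\in B^*$ and $(c_1,\dots,c_n)\in\Z^n$, and I need the integer vector $(c_1,\dots,c_n)$ together with the images of the coordinate units to assemble into a matrix in $SL_{n+1}(\Z)$, exactly as in the proof of \emph{Thm.~\ref{thm-4}}. Composing $F$ with the corresponding $\phi_E$ (from \emph{Lemma~\ref{units}(b)}) and a diagonal $\psi_b$ (from \emph{Lemma~\ref{units}(c)}) should clear the torus part and leave $F(A^*)\subset B$, at which point \emph{Prop.~\ref{key1}} finishes the argument. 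I expect the delicate step to be verifying the determinant-one (invertibility over $\Z$) condition on the exponent matrix, which is precisely what guarantees the composed automorphism is genuine rather than merely an endomorphism; this mirrors the $SL_{m+1}(\Z)$ computation already carried out in \emph{Thm.~\ref{thm-4}} and should go through by the same reasoning.
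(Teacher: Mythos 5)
Your case (a) is correct and matches the paper, but both (b) and (c) contain genuine gaps. In case (b), the step you call ``the key point'' is exactly where the difficulty lives. You need every $u\in A^*$ to lie in ${\cal N}_R(A^{[\pm n]})$, so that $F(u)\in {\cal N}_R(B^{[\pm n]})\subset {\cal N}_R(B)\subset B$. But the hypothesis only puts $A^*$ inside ${\cal N}_R(A)$, and {\it Lemma~\ref{neutral}(b)} gives the containment ${\cal N}_R(A^{[\pm n]})\subset {\cal N}_R(A)$ --- the wrong direction. A $\Z$-grading of $A^{[\pm n]}$ over $R$ need not restrict to $A$, so an element that is neutral in $A$ may a priori acquire nonzero degree in $A^{[\pm n]}$; indeed, the paper's closing remark singles out the equality ${\cal N}_R(A^{[\pm n]})={\cal N}_R(A)$ as an open question in general. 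The paper closes this hole with {\it Lemma~\ref{neutral}(c)}, whose hypothesis is that $A$ is algebraic over $R[A^*]$: one first disposes of the case where $R[A^*]$ is algebraic over $R$ by part (a); in the remaining case $R[A^*]$ is transcendental over $R$, and ${\rm tr.deg}_RA=1$ then forces $A$ to be algebraic over $R[A^*]$, so the equality holds and your chain of containments goes through. This reduction --- the only place the transcendence-degree hypothesis enters case (b) --- is absent from your sketch.

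In case (c) your opening agrees with the paper (split on whether $A^*\subset{\cal N}_{\kk}(A)$, then use {\it Thm.~\ref{character}} and {\it Cor.~\ref{key2}}; note these give all of $A=\kk[w,w^{-1}]$, not merely $A^*\subset\kk[w,w^{-1}]$, since every element of $A$ is algebraic over $\kk[u]$), but the endgame fails as stated. The automorphisms of $B^{[\pm n]}$ furnished by {\it Lemma~\ref{units}(b),(c)} are $B$-algebra automorphisms: they fix $B$ and send $z_i\mapsto b_i\prod_jz_j^{e_{ij}}$ with $(e_{ij})$ invertible over $\Z$. For such a $\sigma$, the element $\sigma(F(w))=\beta\,\sigma(z_1^{c_1}\cdots z_n^{c_n})$ lies in $B$ if and only if $(c_1,\dots,c_n)=0$, i.e., only if $F(w)$ already lay in $B$; no composition of this kind can ``clear the torus part.'' The $SL_{n+1}(\Z)$-automorphisms you invoke, mixing the extra unit $v$ generating $B^*/\kk^*$ with the $z_i$, exist only if $B^{[\pm n]}=\kk[v^{\pm 1},z_1^{\pm 1},\dots,z_n^{\pm 1}]$, i.e., only if $B=\kk[v,v^{-1}]$ --- which is precisely the conclusion you are trying to prove, so the plan is circular. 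Two repairs: (i) the paper's route --- once $A\cong_{\kk}\kk^{[\pm 1]}$, you have $B^{[\pm n]}\cong_{\kk}\kk^{[\pm(n+1)]}$, and {\it Thm.~\ref{thm-4}} (Bhatwadekar--Gupta) applied to $B$ yields $B\cong_{\kk}\kk^{[\pm 1]}\cong_{\kk}A$ outright, with no need of {\it Prop.~\ref{key1}}; or (ii) compose on the source instead: $A^{[\pm n]}=\kk[w^{\pm 1},y_1^{\pm 1},\dots,y_n^{\pm 1}]$ is a genuine $(n+1)$-torus over $\kk$, so one may choose $E\in GL_{n+1}(\Z)$ inverting the matrix of $F$ on units modulo $\kk^*$ and set $G=F\circ\phi_E$; then $G(w)\in\kk^*\cdot v\subset B$, hence $G(A^*)\subset B$, and {\it Prop.~\ref{key1}} finishes.
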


\begin{proof} By {\it Sect.~\ref{reduce}}, it suffices to assume that $R$ is algebraically closed in $A$. Let $F:A^{[\pm n]}\to B^{[\pm n]}$ be an isomorphism of $R$-algebras.

Part (a): If $R[A^*]$ is algebraic over $R$, then $A^*=R^*$, and $F(A^*)=F(R^*)=R^*\subset B$. 
By {\it Prop.~\ref{key1}}, it follows that $A\cong_RB$ in this case. 

Part (b): Assume that $A^*\subset {\cal N}_R(A)$. By part (a), we may also assume that $R[A^*]$ is transcendental over $R$, meaning that $A$ is algebraic over $R[A^*]$. By {\it Lemma~\ref{neutral}(d)}, ${\cal N}_R(A^{[\pm n]})={\cal N}_R(A)$. Therefore:
\[
F(A^*)\subset F({\cal N}_R(A))=F\left( {\cal N}_R\left( A^{[\pm n]}\right)\right) = {\cal N}_R\left( B^{[\pm n]}\right) \subset {\cal N}_R(B)\subset B
\]
By {\it Prop.~\ref{key1}}, it follows that $A\cong_RB$ in this case. 

Part (c): Assume that $R$ is a field. By part (b), we may assume that $A^*\not\subset {\cal N}_R(A)$. Then, by {\it Cor.~\ref{character}}, we have that $A\cong_RR^{[\pm 1]}$.
By {\it Thm.~\ref{thm-4}}, it follows that $A\cong_RB$ in this case.
\end{proof}


\section{Remarks}\label{summary}

\begin{remark}\label{summary}  {\rm The following two cases of Laurent cancellation are given in \cite{Freudenburg.ppta}. }
\begin{theorem} Let $R$ be an integral domain, and let $A$ be an $R$-algebra. Given $u\in A^*$, set
\[
R_u=\{ \lambda\in R^*\, |\, u-\lambda\in A^*\}\,\, .
\]
Let $A_R^{\tau}$ be the subset of $R$-transtable units, i.e., $u\in A^*$ such that $R_u$ is non-empty. 
If either (a) $R[A^*]=R[A_R^{\tau}]$, or (b) $A$ is algebraic over $R[A_R^{\tau}]$, 
then $A$ has the Laurent cancellation property over $R$.
\end{theorem}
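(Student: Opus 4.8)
The plan is to route both cases through \emph{Prop.~\ref{key1}}: given any $R$-isomorphism $F:A^{[\pm n]}\to B^{[\pm n]}$, it suffices to verify the single inclusion $F(A^*)\subset B$. The organizing idea is that the defining feature of an $R$-transtable unit $u$ — that $u-\lambda$ is again a unit for some $\lambda\in R^*$ — is an intrinsic property of an $R$-algebra and is therefore preserved by every $R$-isomorphism. So the first step is the elementary observation that $F$ fixes $R$ and carries units to units, hence restricts to a bijection from the transtable units of $A^{[\pm n]}$ onto the transtable units of $B^{[\pm n]}$: if $u-\lambda\in (A^{[\pm n]})^*$ then $F(u)-\lambda=F(u-\lambda)\in(B^{[\pm n]})^*$, and symmetrically for $F^{-1}$.

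The key step, which carries the real content, is to locate the transtable units of a Laurent polynomial ring. I claim every $R$-transtable unit of $D=B[z_1^{\pm 1},\ldots,z_n^{\pm 1}]=B^{[\pm n]}$ already lies in $B$. Let $w\in D^*$ be transtable, so $w-\lambda\in D^*$ with $\lambda\in R^*\subset B^*$; in particular $\lambda\ne 0$. By \emph{Lemma~\ref{units}(a)} applied with base ring $B$, write $w=b\,z_1^{e_1}\cdots z_n^{e_n}$ with $b\in B^*$. Then $w-\lambda$ carries a monomial term of multidegree $(e_1,\ldots,e_n)$ with coefficient $b\ne 0$ together with the nonzero constant term $-\lambda$; since every unit of $D$ is supported on a single monomial, this forces $(e_1,\ldots,e_n)=0$, so $w=b\in B^*$, and then $w-\lambda=b-\lambda$ is a unit of $D$ lying in $B$, hence lies in $B^*$. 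Thus the transtable units of $B^{[\pm n]}$ are precisely those of $B$, all lying in $B$, and identically the transtable units of $A^{[\pm n]}$ are exactly $A_R^\tau\subset A$. Combining with the first step, $F(A_R^\tau)=(B^{[\pm n]})_R^\tau=B_R^\tau\subset B$, and since $F$ is an $R$-homomorphism, $F(R[A_R^\tau])=R[F(A_R^\tau)]\subset B$.

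Both hypotheses now close the argument. Under (a), $A^*\subset R[A^*]=R[A_R^\tau]$, so $F(A^*)\subset F(R[A_R^\tau])\subset B$, and \emph{Prop.~\ref{key1}} yields $A\cong_R B$. Under (b), $F(A)$ is algebraic over $F(R[A_R^\tau])\subset B$, hence algebraic over $B$; since $B$ is algebraically closed in its own Laurent extension $B^{[\pm n]}$ (as used in \emph{Sect.~\ref{reduce}}), it follows that $F(A)\subset B$, so in particular $F(A^*)\subset B$ and again \emph{Prop.~\ref{key1}} applies.

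The one point I would scrutinize — and the conceptual crux of the whole proof — is the descent claim that transtable units of a Laurent extension already live in the base. It rests on two features that must be used precisely: that $\lambda$ is a \emph{nonzero} scalar drawn from $R^*$ (so that $w-\lambda$ genuinely acquires a second term unless $w$ has trivial monomial part), and that units of a Laurent polynomial ring are exactly single $B^*$-monomials. Both are furnished by \emph{Lemma~\ref{units}}, so no genuine obstacle remains; everything else is routine bookkeeping with the functoriality of $R[-]$ and with the algebraic closedness of $B$ in $B^{[\pm n]}$.
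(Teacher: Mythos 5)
Your proof is correct, but there is nothing in this paper to compare it against: the statement appears only in \emph{Remark~\ref{summary}}, where it is quoted without proof from the external preprint \cite{Freudenburg.ppta}. So you have effectively supplied a self-contained proof, inside this paper's toolkit, of a result the paper merely cites. Your argument checks out at every step: the descent claim --- that any $R$-transtable unit $w$ of $D=B^{[\pm n]}$ already lies in $B$ --- is sound, since by \emph{Lemma~\ref{units}(a)} both $w=bz_1^{e_1}\cdots z_n^{e_n}$ and $w-\lambda$ must be single $B^*$-monomials, and the presence of the nonzero constant $-\lambda$ forces $(e_1,\dots,e_n)=0$ and then $w-\lambda\in B\cap D^*=B^*$; together with the trivial invariance of transtability under $R$-isomorphisms this gives $F(A_R^{\tau})=B_R^{\tau}\subset B$. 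Case (a) then feeds directly into \emph{Prop.~\ref{key1}}, and case (b) additionally uses that $B$ is algebraically closed in $B^{[\pm n]}$ (exactly as invoked in \emph{Sect.~\ref{reduce}}) to get $F(A)\subset B$ before applying \emph{Prop.~\ref{key1}}. This is very much in the spirit of how the paper itself uses \emph{Prop.~\ref{key1}} in the proof of \emph{Thm.~\ref{thm-2}}(a),(b) --- there the inclusion $F(A^*)\subset B$ is obtained from triviality of units or from the neutral subalgebra ${\cal N}_R$, whereas you obtain it from the rigidity of transtable units; one small aesthetic point is that in case (a) you can shorten the ending to $F(A^*)\subset F(R[A^*])=F(R[A_R^{\tau}])\subset B$.
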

\end{remark}

\begin{remark} {\rm For the $R$-algebra $A$, the question of Laurent cancellation over $R$ is closely related to the question whether ${\cal N}_R(A^{[\pm n]})={\cal N}_R(A)$. In particular, we ask whether this equality holds in the case ${\rm tr.deg}_RA=1$. Another related question is the following: While a $\Z$-grading of $A^{[\pm n]}$  may not restrict to $A$, it does give a $\Z$-filtration of both $A^{[\pm n]}$ and $A$. Let ${\rm Gr}(A^{[\pm n]})$ and ${\rm Gr}(A)$ denote the associated graded rings. Does ${\rm Gr}(A^{[\pm n]})={\rm Gr}(A)^{[\pm n]}$?}
\end{remark}


\begin{thebibliography}{1}

\bibitem{Abhyankar.Eakin.Heinzer.72}
S.~Abhyankar, P.~Eakin, and W.~Heinzer, \emph{On the uniqueness of the
  coefficient ring in a polynomial ring}, J. Algebra \textbf{23} (1972),
  310--342.

\bibitem{Bhatwadekar.Gupta.12}
S.M. Bhatwadekar and N.~Gupta, \emph{The structure of a {L}aurent polynomial
  fibration in $n$ variables}, J. Algebra \textbf{353} (2012), 142--157.

\bibitem{Freudenburg.ppta}
G.~Freudenburg, \emph{Two cases of cancellation for {L}aurent polynomial
  rings}, preprint 2013, 7 pages.

\bibitem{Freudenburg.06}
\bysame, \emph{Algebraic {T}heory of {L}ocally {N}ilpotent {D}erivations},
  Encyclopaedia of Mathematical Sciences, vol. 136, Springer-Verlag, Berlin,
  Heidelberg, New York, 2006.

\bibitem{Makar-Limanov.98}
L.~Makar{-}Limanov, \emph{Locally nilpotent derivations, a new ring invariant
  and applications}, Lecture notes, Bar-Ilan University, 1998. Avail. at {\tt
  http://www.math.wayne.edu/$\sim$lml/}.

\end{thebibliography}
\bibliographystyle{amsplain}
\bigskip

\bigskip
\bigskip

\noindent \address{Department of Mathematics\\
Western Michigan University\\
Kalamazoo, Michigan 49008}\\
\email{gene.freudenburg@wmich.edu}
\bigskip

\end{document}